\documentclass[12pt]{amsart}

\usepackage{amssymb,latexsym,amscd}
\usepackage{amsthm,amsmath,mathrsfs}
\usepackage{youngtab}

\textheight22cm
\textwidth17cm
\voffset-1cm
\hoffset-2,3cm

\newcommand{\slfr}{\mathfrak{sl}}

\newcommand{\cV}{\mathcal{V}}

\newcommand{\cO}{\mathcal{O}}

\newcommand{\cG}{\mathcal{G}}

\newcommand{\cM}{\mathcal{M}}

\newcommand{\cL}{\mathcal{L}}

\newcommand{\cC}{\mathcal{C}}

\newcommand{\Sym}{\mathrm{Sym}}
\newcommand{\Pic}{\mathrm{Pic}}

\newcommand{\Stab}{\mathrm{Stab}}

\newcommand{\lra}{\longrightarrow}

\newcommand{\ra}{\rightarrow}

\newcommand{\PP}{\mathbb{P}}

\newcommand{\cP}{\mathcal{P}}

\newcommand{\ZZ}{\mathbb{Z}}

\newcommand{\CC}{\mathbb{C}}


\newcommand{\End}{\mathrm{End}}
\newcommand{\Mrm}{\mathrm{M}}
\newcommand{\GG}{\mathrm{G}_2}

\newcommand{\GL}{\mathrm{GL}}

\newcommand{\SL}{\mathrm{SL}}
\newcommand{\SO}{\mathrm{SO}}
\newcommand{\OO}{\mathbb{O}}
\newcommand{\HH}{\mathbb{H}}

\newcommand{\im}{\mathrm{im}}
\newcommand{\IM}{\mathrm{Im}}

\def\map#1{\ \smash{\mathop{\longrightarrow}\limits^{#1}}\ }


\theoremstyle{plain}

\newtheorem{thm}{Theorem}[section]

\newtheorem{lem}[thm]{Lemma}

\newtheorem{prop}[thm]{Proposition}

\newtheorem{cor}[thm]{Corollary}

\begin{document}

\title[]{The space of generalized $\GG$-theta functions of level one}

\author{Chlo\'e Gr\'egoire}
\author{Christian Pauly}

\address{Institut Fourier \\ 100 rue des maths \\ BP74 \\ 38402 St Martin d'H\`eres Cedex \\ France}

\email{chloe.gregoire@gmail.com}

\address{Laboratoire de Math\'{e}matiques J.-A. Dieudonn\'e \\ Universit\'e de Nice - Sophia Antipolis \\
06108 Nice Cedex 02 \\ France}

\email{pauly@unice.fr}

\subjclass[2000]{Primary 14D20, 14H60, 17B67}

\maketitle

\begin{abstract}
Let $C$ be a smooth projective complex curve of genus at least $2$. 
For a simply-connected complex Lie group $G$ the vector space of global sections 
$H^0(\mathcal{M}(G), \mathcal{L}_G^{\otimes l})$ of the $l$-th power of the ample generator 
$\mathcal{L}_G$ of the Picard group of the
moduli stack of principal $G$-bundles over $C$ is commonly called the space of generalized $G$-theta functions or Verlinde space of level $l$.
In the case $G = \GG$, the exceptional Lie group of automorphisms of the complex Cayley algebra, we study natural linear 
maps between the Verlinde space $H^0(\mathcal{M}(G_2), \mathcal{L}_{\mathrm{G}_2})$ of level one and some Verlinde spaces 
for $\SL_2$ and $\SL_3$. We deduce that the image of the monodromy representation of the WZW-connection for $G = \GG$ and
$l=1$ is infinite.
\end{abstract}

\section{Introduction}
Let $C$ be a smooth projective complex curve of genus $g \geq 2$. For a complex semi-simple Lie group $G$ we denote 
by $\cM(G)$ the moduli stack of principal $G$-bundles over $C$. If $G$ is simply-connected, the Picard group of the stack
$\cM(G)$ is infinite cyclic and we denote by $\cL$ its ample generator. The finite-dimensional vector spaces of global sections 
$H^0(\cM(G), \cL^{\otimes l})$, the so-called spaces of generalized $G$-theta functions or Verlinde spaces of level $l$, have been intensively studied 
from different perspectives, e.g. gauge theory, mathematical theory of conformal blocks, and quantization. Note that much of the literature deals with 
the vector bundle case $G = \SL_r$.

\bigskip

In this note we will study the Verlinde space $H^0(\mathcal{M}(\GG), \mathcal{L}_{\mathrm{G}_2})$ for the smallest exceptional Lie group $\GG$ and
at level $1$. The starting point of our investigation was the striking numerical relation between the dimensions of the Verlinde
spaces for $\GG$ at level $1$ and for $\SL_2$ at level $3$
\begin{equation} \label{Verlinde}
 \dim H^0(\mathcal{M}(\GG), \mathcal{L}_{\mathrm{G}_2}) = \frac{1}{2^g} \dim H^0(\mathcal{M}(\SL_2), \mathcal{L}^{\otimes 3}_{\SL_2}) =
\left( \frac{5 + \sqrt{5}}{2} \right)^{g-1} + \left( \frac{5 - \sqrt{5}}{2} \right)^{g-1}.
\end{equation}
These dimensions are computed by the Verlinde formula (see e.g. \cite{B4} Corollary 9.8). It turns out that linear maps
between these Verlinde spaces arise in a natural way by restricting to some distinguished substacks in $\cM(\GG)$. The group 
$\GG$ contains the subgroups $\SL_3$ and $\SO_4$ as maximal reductive subgroups of maximal rank. These group
inclusions induce maps
$$ i: \cM(\SL_3) \lra \cM(\GG) \qquad \text{and} \qquad j : \cM(\SL_2) \times \cM(\SL_2) \lra  \cM(\GG)$$
via the \'etale double cover $\SL_2 \times \SL_2 \ra \SO_4$. 

\bigskip

Our main results are the following.

\bigskip

\noindent

{\bf Theorem I.}
For any smooth curve $C$ of genus $g \geq 2$ the linear map obtained by pull-back by
the map $j$ of global sections of $\cL_{\mathrm{G}_2}$
$$ j^*:  H^0(\cM(\GG), \cL_{\mathrm{G}_2} )  \lra  \left[ H^0(\cM(\SL_2), \cL_{\SL_2}^{\otimes 3} ) \otimes 
H^0(\cM(\SL_2), \cL_{\SL_2} ) \right]_0$$
is an isomorphism.

\bigskip
\noindent

{\bf Theorem II.}
For any smooth curve $C$ of genus $g \geq 2$ without vanishing theta-null the linear map obtained by pull-back
by the map $i$ of global sections of $\cL_{\mathrm{G}_2}$
$$ i^*:  H^0(\cM(\GG), \cL_{\mathrm{G}_2} ) \lra  H^0(\cM(\SL_3), \cL_{\SL_3})_+ $$
is surjective.

\bigskip
\noindent

The subscripts $0$ and $+$ denote subspaces of invariant sections for the group of $2$-torsion line bundles over $C$ and for
the duality involution respectively.

\bigskip

The first example of isomorphism between Verlinde spaces was given in \cite{B2} for the embedding $\CC^* \subset \SL_2$ at level $1$.
More recently, the rank-level dualities provide series of isomorphisms between Verlinde spaces (and their duals) for special pairs of structure groups.
In this context Theorem I can be viewed as a new example.

\bigskip

Most of the constructions presented in this paper are valid for the coarse moduli spaces of semi-stable $G$-bundles over $C$. However,
the generator $\mathcal{L}_{\mathrm{G}_2}$  of the Picard group of the moduli stack $\cM(\GG)$ does not descend \cite{LS} to the moduli space
$M(\GG)$ because the Dynkin index of $\GG$ is $2$. This forces us to use the moduli stack.

\bigskip

Theorem I has an application to the flat projective connection on the bundle of conformal blocks associated to the Lie algebra $\mathfrak{g}_2$
at level $1$. Let $\pi : \cC \ra S$ be a family of smooth projective curves and consider the vector
bundle $\mathbb{V}^*_1(\mathfrak{g}_2)$ over $S$ whose fiber over the curve $C = \pi^{-1}(s)$ equals the conformal block
$\mathcal{V}_1^*(\mathfrak{g}_2)$. Note that this conformal block is canonically (up to homothety) isomorphic to our space 
$H^0(\cM(\GG), \cL_{\mathrm{G}_2})$ by the general Verlinde isomorphism \cite{LS}. By \cite{U} the vector bundle
$\mathbb{V}^*_1(\mathfrak{g}_2)$ is equipped with a flat projective connection, the so-called WZW-connection. Then we have
the

\bigskip
\noindent

{\bf Corollary.} There exist families of smooth curves of any genus $g \geq 2$ for which the projective monodromy
representation of the projective WZW-connection on $\mathbb{V}^*_1(\mathfrak{g}_2)$ has infinite image.

\bigskip

In section 2 we review the properties of the
exceptional group $\GG$ and of its subgroups, as well as some results on the Verlinde spaces for $\SL_2$ at low levels. In section 3 we 
give the proof of the main theorems.

\bigskip
\noindent 

{\bf Acknowledgements:} Part of the results of this article are contained in the PhD-thesis of the first author. We would
like to thank the referee for useful comments on the first version of this article and in particular for drawing our attention to
the paper \cite{A}. We also thank Laurent Manivel and Olivier Serman for useful discussions during the preparation of this article.

\bigskip

\section{Moduli spaces and moduli stacks of principal $\GG$-bundles}

In this section we review some results on the exceptional group $\GG$ and on the moduli of principal $\GG$-bundles over
a smooth projective curve $C$.

\subsection{The exceptional group $\GG$ and its rank-$2$ subgroups}

The complex exceptional group $\GG$ is given by one of the following equivalent definitions :

\begin{itemize}

\item as the automorphism group $\GG = \mathrm{Aut}(\OO)$ of the complex $8$-dimensional Cayley algebra or algebra of octonions $\OO$ (see e.g. \cite{Ba}).

\item as the connected component of the stabilizer in $\GL(V)$ of a non-degenerate alternating trilinear form $\omega : \Lambda^3 V \ra \CC$ on a 
complex $7$-dimensional vector space $V$ (see e.g. \cite{SK})

\end{itemize}

We recall the following facts:

\begin{enumerate}
\item[(a)] For a generic trilinear form $\omega$ we have  $\Stab_{\mathrm{GL}(V)}(\omega) = \GG \times \mu_3$ and 
$\Stab_{\mathrm{SL}(V)}(\omega) = \GG$. Note that non-degenerate alternating
forms form the unique dense $\GL(V)$-orbit in $\Lambda^3 V^*$.
\item[(b)] Introducing $\GG$ as $\mathrm{Aut}(\OO)$ there is a natural non-degenerate $\GG$-invariant trilinear form on the space of purely
imaginary octonions $V = \IM(\OO)$ given
by $\omega(x,y,z) = \mathrm{Re}(xyz)$, as well as a non-degenerate symmetric  $\GG$-invariant bilinear form given by $q(x,y) = \mathrm{Re}(xy)$. 
This shows that $\GG$ is a subgroup of $\SO_7$.
\item[(c)] The complex Lie group $\GG$ is connected, simply-connected, has no center and is of dimension $14$. 
\end{enumerate}

\medskip
\noindent

According to \cite{BD} the group $\GG$ has, up to conjugation, two maximal Lie subgroups of maximal rank, i.e. of rank $2$. These two subgroups are of type $A_2$ and $A_1 \times A_1$ respectively. As we could not find a reference in the literature, we will give for the reader's convenience an explicit realization of these subgroups in $\GG$.

\begin{itemize}
\item  $\SL_3 \subset \GG$. We consider a non-degenerate alternating trilinear form $\omega \in \Lambda^3 V^*$ and define 
$\GG = \Stab_{\SL(V)}(\omega)$. We associate to $\omega$ the quadratic form
$$ q_\omega : \Sym^2 V \ra \CC, \qquad  q_\omega (x,y) = L_x \omega \wedge L_y \omega \wedge \omega \in \Lambda^7 V^* \cong \CC, $$
where $L_x : \Lambda^3 V^* \ra \Lambda^2 V^*$ denotes the contraction operator with the vector $x \in V$. Note that $\omega$ 
is non-degenerate iff $q_\omega$ is non-degenerate. We now choose a $3$-dimensional subspace $W \subset V$ such that $W$ is isotropic for $q_\omega$
and such that the restriction $\omega_0 = \omega_{| W} \not= 0$. The following gives a description of $\SL_3$ as a subgroup of $\GG$.

\begin{prop}
With the above notation we have
$$ \SL_3 = \Stab_{\mathrm{G}_2} (W) = \{ g \in \GG \ | \ g(W) = W \}.$$
More precisely, the subspace $W \subset V$ induces a natural decomposition 
\begin{equation} \label{decV}
 V = W \oplus \Lambda^2 W \oplus \CC, 
\end{equation} 
which coincides with the decomposition of $V$ as $\SL_3$-module.
\end{prop}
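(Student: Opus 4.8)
The plan is to build, canonically from the pair $(W,\omega)$, the full decomposition \eqref{decV} and then to read off the stabiliser from it. Since $q_\omega$ is non-degenerate on the $7$-dimensional space $V$, its maximal isotropic subspaces have dimension $3$, so the given $W$ is maximal isotropic, $(W^\perp)^\perp = W$ and $\dim W^\perp = 4$. Use $q_\omega$ to define the cross product $\times : V \times V \to V$ by $q_\omega(x\times y, z) = \omega(x,y,z)$, and set $W' := \{x\times y : x,y\in W\}$. For $x,y\in W$ the functional $q_\omega(x\times y,\cdot)|_W = \omega_0(x,y,\cdot)$ runs through all of $W^*$, so the composite $\Lambda^2 W \xrightarrow{\times} V \twoheadrightarrow V/W^\perp \cong W^*$ is the canonical isomorphism; hence $x\wedge y\mapsto x\times y$ identifies $\Lambda^2 W \xrightarrow{\sim} W'$, and $V = W^\perp \oplus W' = W \oplus L \oplus W'$ once we put $L := W^\perp\cap (W')^\perp$. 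All three summands are attached functorially to $(W,\omega)$, hence are preserved by $\Stab_{\GG}(W)$.

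The crux is the normal form of $\omega$ in this decomposition. First one checks that $W'$ is again isotropic (this follows from $q_\omega|_W=0$ together with the standard identity writing $q_\omega(x\times y,x'\times y')$ as a combination of products of $q_\omega$-values on $W$ and of the associator $4$-form, all of which vanish on $W$, the latter because $\dim W=3$), and that $q_\omega(W,L)=q_\omega(W',L)=0$. Grading $\Lambda^3 V^*$ by the tri-degree in $(W,W',L)$ and rewriting each component through the cross-product pairing, one finds that every mixed component with two legs in $W$ or two legs in $W'$ vanishes by isotropy and by orthogonality to $L$; the surviving components are $\omega_0=\omega|_W\in\Lambda^3 W^*$, the restriction $\omega|_{W'}\in\Lambda^3 (W')^*$, and a $(1,1,1)$-term $\ell\wedge\beta$, where $\ell$ spans $L^*$ and $\beta$ is a pairing between $W$ and $W'$. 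Non-degeneracy of $\omega$ then forces $\omega_0\neq 0$, $\omega|_{W'}\neq 0$ and $\beta$ to be the (up to scalar) canonical perfect pairing $W\times W'\cong W\times W^*\to\CC$. In particular $W'\cong\Lambda^2 W$ and $L\cong W^\perp/W\cong\CC$ carry, as $\SL(W)$-modules, the dual standard and the trivial representation, which is exactly \eqref{decV}.

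Finally, restriction gives a homomorphism $\Stab_{\GG}(W)\to\GL(W)$, $g\mapsto g|_W$. Any such $g$ fixes $\omega_0\in\Lambda^3 W^*\setminus\{0\}$, so $g|_W\in\SL(W)$; and $g$ preserves $q_\omega$ (being in $\SL(V)$ and fixing $\omega$), hence commutes with the cross product, $g(x\times y)=g(x)\times g(y)$, so its action on $W'\cong\Lambda^2 W$ and on $L$ is entirely determined by $g|_W$. This makes the restriction injective. For surjectivity, given $A\in\SL(W)$ define $\widetilde A$ to be $A$ on $W$, $\Lambda^2 A$ on $W'$ and the identity on $L$; then $\det\widetilde A=\det(A)\,(\det A)^2=1$, and the normal form of the previous paragraph shows $\widetilde A^*\omega=\omega$ term by term, since each of $\omega_0$, $\omega|_{W'}$ and $\ell\wedge\beta$ is $\SL(W)$-invariant. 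Hence $\widetilde A\in\Stab_{\GG}(W)$ restricts to $A$, and $\Stab_{\GG}(W)\cong\SL(W)=\SL_3$. The main obstacle is the normal-form step of the second paragraph: one must establish the isotropy of $W'$ and the vanishing of all mixed components, and then extract from non-degeneracy of $\omega$ that the residual $(1,1,1)$-term is the perfect pairing and not a degenerate one; the remaining steps are formal.
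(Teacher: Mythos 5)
Your first paragraph essentially reproduces the paper's construction: the paper also obtains the summand $\Lambda^2 W\subset V$ as the image of the contraction $\Lambda^2 W\hookrightarrow\Lambda^2 V\to V^*\cong V$ (your cross product composed with $q_\omega$-duality), checks $W\cap\Lambda^2W=\{0\}$ from isotropy of $W$, and takes the orthogonal line to get \eqref{decV}; the inclusion $\Stab_{\mathrm{G}_2}(W)\subseteq\SL(W)$ then follows for both of you from invariance of $\omega_0$ and of the decomposition. Where you genuinely diverge is the reverse inclusion. The paper avoids any normal form: it lets $\GG$ act on the $6$-dimensional Grassmannian of $q_\omega$-isotropic $3$-planes, so $\dim\Stab_{\mathrm{G}_2}(W)\geq 14-6=8=\dim\SL_3$, and equality follows from the containment already proved. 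You instead put $\omega$ in normal form relative to $V=W\oplus W'\oplus L$ and lift each $A\in\SL(W)$ explicitly. That is a legitimate and more informative route --- it exhibits the embedding rather than just proving its existence --- but it is also where your argument has a real gap.

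Concretely: among the mixed components of $\omega$, those of tri-degree $(2,1,0)$ and $(2,0,1)$ do vanish for the reasons you give, since $\omega(w_1,w_2,\cdot)=q_\omega(w_1\times w_2,\cdot)$ with $w_1\times w_2\in W'$, and $W'$ is isotropic and orthogonal to $L$. But the components of tri-degree $(1,2,0)$ and $(0,2,1)$ are $q_\omega(w_1'\times w_2',\cdot)$ evaluated on $W$ and on $L$, i.e. they amount to the separate claim $W'\times W'\subseteq W$. This does not follow from ``isotropy and orthogonality to $L$'': the quadratic identity you invoke for $q_\omega(x\times y,x'\times y')$ yields the isotropy of $W'$ (its associator-four-form term vanishes because four vectors in the $3$-dimensional $W$ are dependent), but locating $W'\times W'$ requires evaluating that four-form on mixed arguments not all lying in $W$, where no such dimension argument applies. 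Likewise, the assertion that non-degeneracy of $\omega$ forces the $(1,1,1)$-term $\beta$ to be the canonical $\SL(W)$-invariant pairing $W\otimes\Lambda^2W\to\Lambda^3W\cong\CC$ --- which is exactly what makes $\widetilde A^*\omega=\omega$ hold term by term --- is stated rather than proved. These claims are true, and you rightly flag the normal form as the main obstacle, but as written the surjectivity half of your proof is incomplete; the paper's orbit--stabilizer dimension count is precisely the device that lets it bypass the normal form altogether.
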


\begin{proof}
We consider the composite map 
$$ \iota : \Lambda^2 W \hookrightarrow \Lambda^2 V \map{L_\omega} V^*, $$
where $L_\omega$ is contraction with $\omega \in \Lambda^3 V^*$. If we further compose with the projection
$V^* \ra W^*$, we obtain the isomorphism $\Lambda^2 W \map{\sim} W^*$ induced by the non-zero restricted form
$\omega_0$.  Hence $\iota$ is injective and we also denote by $\Lambda^2 W  \subset V$ its image in $V$, which 
we identify with $V^*$ via the non-degenerate quadratic form $q_\omega$. Next we observe that $W \cap \Lambda^2 W 
= \{ 0 \}$, since the composite map $W \ra V^* \ra W^*$ is zero --- $W$ is isotropic. This shows that 
$W \oplus \Lambda^2 W$ is a hyperplane in $V$. Then we take the orthogonal complement to obtain the 
decomposition \eqref{decV}. We observe that any $g \in \Stab_{\mathrm{G}_2} (W)$ also preserves the subspace
$\Lambda^2 W  \subset V$, hence the decomposition \eqref{decV}. Moreover, since $g(\omega_0) = \omega_0$, we have
$g \in \SL_3 = \SL(W)$. Hence  $\Stab_{\mathrm{G}_2} (W) \subset \SL_3$. On the other hand we consider the action of
$\GG$ on the Grassmannian of isotropic subspaces $W \subset V$, which is of dimension $6$. Hence $\dim 
\Stab_{\mathrm{G}_2}(W) \geq 8$, which leads to the equality $\Stab_{\mathrm{G}_2} (W) = \SL_3$. 
\end{proof}

\bigskip

\item  $\SO_4 \subset \GG$. We need to recall some basic facts on quarternions and octonions. We begin by recalling that 
the complex octonion algebra $\OO$ is generated as $\CC$-vector space by the $8$ basis
vectors $e_0 = 1 , e_1, \ldots, e_7$ satisfying the relations given by the Fano plane (see e.g. \cite{Ba}). Then the algebra $\OO$ contains 
as a subalgebra the complex quaternion algebra $\HH = \CC 1  \oplus \CC e_1 \oplus \CC e_2 \oplus \CC e_3$ and we have a vector space
decomposition 
\begin{equation} \label{decOO}
 \OO = \HH \oplus \HH e_4.
\end{equation}
We recall that the subgroup $U = \{ p \in \HH \ | \ p \overline{p} = 1 \}$ of unit quarternions can be identified with the complex Lie 
group $\SL_2$ and that there is a surjective group homomorphism
$$\varphi : U \times U \lra \SO(\HH) = \SO_4, \qquad \varphi(p,q) = [x \mapsto px\overline{q}] $$
with kernel $\ZZ/2$ generated by $(-1,-1)$. Using the decomposition \eqref{decOO} we consider the map
$$\psi:  U \times U \lra \SO(\OO),  \qquad \psi(p,q) = (\varphi(p,p), \varphi(p,q)).$$
One easily checks that $\im \ \psi \subset \GG$ and that $\ker \psi = \ker \varphi$. This gives a realization of $\SO_4$ as
subgroup of $\GG$. We also note that the center $Z(\mathrm{SO}_4)$ is generated by $\varphi(-1,1) = - \mathrm{Id}_\HH$ and that $\SO_4$ 
is the centralizer of the element $\psi(-1,1) = (\mathrm{Id}_\HH, - \mathrm{Id}_\HH) \in \GG$ of order $2$ (see \cite{BD}).

\end{itemize}

\subsection{The moduli space $\Mrm(\GG)$ and the moduli stack $\cM(\GG)$}

Because of the equality $\Stab_{\mathrm{SL}(V)}(\omega) = \GG$, a principal $\GG$-bundle $E_{\mathrm{G}_2}$ is equivalent to a rank-$7$
vector bundle $\mathcal{V}$ with trivial determinant  equipped with a non-degenerate alternating trilinear from $\eta : 
\Lambda^3 \mathcal{V} \ra \cO_C$ . The correspondence
is given by sending $E_{\mathrm{G}_2}$ to  $(\mathcal{V} = E_{\mathrm{G}_2} (V), \eta)$ via the embedding $\GG \subset \SL(V)$.
Moreover, it is shown in \cite{S} that $E_{\mathrm{G}_2}$ is semi-stable if and only if $\mathcal{V}$ is semi-stable. We therefore
obtain a map between coarse moduli spaces of semi-stable bundles $\Mrm(\GG) \ra \Mrm(\SL_7)$.

\bigskip

Although the embeddings of $\SL_3$ and $\SO_4$ in $\GG$ are defined only up to conjugation, the induced maps between coarse moduli spaces
of semi-stable principal bundles 
$$i: \Mrm(\SL_3) \ra  \Mrm(\GG) \qquad  \text{and} \qquad j : \Mrm(\SL_2) \times \Mrm(\SL_2) \ra \Mrm(\SO_4) \ra \Mrm(\GG)$$
are well-defined. We find it more convenient to work with the simply-connected group $\SL_2 \times \SL_2$, which is a double cover of the
subgroup $\SO_4$.  Abusing notation we also denote by $i$ and $j$ their composites with the map $\Mrm(\GG) \ra \Mrm(\SL_7)$. It follows from
the description of the subgroups $\SL_3$ and $\SO_4$ in the previous section that
\begin{equation} \label{desj}
i(E) = E \oplus E^* \oplus \cO_C \qquad \text{and} \qquad  j(F, G)  = \End_0(F) \oplus F \otimes G.
\end{equation}
Here $E$ is an $\SL_3$-bundle and $F$, $G$ are $\SL_2$-bundles. Note that $i(E)$ and $j(F,G)$ are semi-stable if $E,F$ and $G$ are semi-stable.

\bigskip

{\bf Remark:} It is shown in \cite{G} that the singular locus of the moduli space $\Mrm(\GG)$ coincides with the union of the
images $i(\Mrm(\SL_3)) \cup j(\Mrm(\SO_4))$.

\bigskip

We also denote by $i$ and $j$ the maps between the corresponding moduli stacks. Let $\cL_G$ denote the ample generator of the Picard group 
$\Pic(\cM(G))$ when $G$ is a simply-connected group. 

\begin{lem}
With the above notation we have
$$i^* \cL_{\mathrm{G}_2} = \cL_{\SL_3} \qquad \text{and} \qquad  j^* \cL_{\mathrm{G}_2}  = \cL_{\SL_2}^{\otimes 3} \boxtimes \cL_{\SL_2}.$$
\end{lem}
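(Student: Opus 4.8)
The plan is to reduce everything to the computation of Dynkin indices. Recall that for a simply-connected simple group $G$ the Picard group $\Pic(\cM(G))$ is infinite cyclic, generated by $\cL_G$ (Laszlo--Sorger \cite{LS}), and that for a homomorphism $\phi : H \to G$ of simply-connected groups the induced pull-back $\phi^* : \Pic(\cM(G)) \to \Pic(\cM(H))$ is multiplication by the Dynkin index $d_\phi$ of $\phi$. When $H = H_1 \times H_2$ is a product of two simple factors one has $\Pic(\cM(H)) = \Pic(\cM(H_1)) \times \Pic(\cM(H_2))$, and the statement becomes $\phi^* \cL_G = \cL_{H_1}^{\otimes d_1} \boxtimes \cL_{H_2}^{\otimes d_2}$, where $d_k$ is the Dynkin index of the restriction of $\phi$ to the $k$-th factor (the other factor sent to the identity). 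So I would only need to identify the integers $d_i$, $d_1$, $d_2$ attached to the maps $i$ and $j$.

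To compute them I would factor through the standard embedding $\GG \subset \SL(V) = \SL_7$. Two normalizations make this convenient: on $\cM(\SL_r)$ the generator $\cL_{\SL_r}$ is the determinant-of-cohomology line bundle attached to the standard representation of $\SL_r$, of Dynkin index $1$; and the pull-back of $\cL_{\SL_7}$ to $\cM(\GG)$ equals $\cL_{\GG}^{\otimes 2}$, since the $7$-dimensional representation of $\GG$ has Dynkin index $2$ (this is precisely the fact, recalled in the introduction, that prevents $\cL_{\GG}$ from descending to the coarse space). Since the Dynkin index is multiplicative under composition, the index of $\SL_3 \to \GG \to \SL_7$ equals $2\, d_i$, and the two partial indices of $\SL_2 \times \SL_2 \to \GG \to \SL_7$ equal $2 d_1$ and $2 d_2$. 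Each composite index is just the Dynkin index of $V$ viewed as a module over $\SL_3$, respectively over each $\SL_2$-factor, which I can read off from the explicit decompositions already established.

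Concretely, the Proposition gives $V|_{\SL_3} = W \oplus \Lambda^2 W \oplus \CC \cong \mathbf{3} \oplus \bar{\mathbf{3}} \oplus \mathbf{1}$, of Dynkin index $1 + 1 + 0 = 2$; hence $2 d_i = 2$ and $d_i = 1$. For $j$, formula \eqref{desj} gives $V|_{\SL_2 \times \SL_2} = \End_0(F) \oplus F \otimes G$. Restricting to the first factor yields $\mathbf{3} \oplus \mathbf{2}^{\oplus 2}$, of index $4 + 2 = 6$, and restricting to the second factor yields $\mathbf{1}^{\oplus 3} \oplus \mathbf{2}^{\oplus 2}$, of index $0 + 2 = 2$ (the standard $\SL_2$-module has index $1$ and the adjoint module $\mathbf{3} = \End_0$ has index $4$). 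Thus $2 d_1 = 6$ and $2 d_2 = 2$, giving $d_1 = 3$ and $d_2 = 1$. Reassembling, $i^* \cL_{\GG} = \cL_{\SL_3}$ and $j^* \cL_{\GG} = \cL_{\SL_2}^{\otimes 3} \boxtimes \cL_{\SL_2}$. One may alternatively obtain $i^*\cL_{\GG} = \cL_{\SL_3}$ from the identity $i^*(\cL_{\GG}^{\otimes 2}) = \cL_{\SL_3}^{\otimes 2}$ together with the torsion-freeness of $\Pic(\cM(\SL_3)) = \ZZ$, cancelling the common factor $2$.

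The only genuinely delicate point is the bookkeeping of normalizations: one must check that pull-back of $\cL_G$ is really governed by the Dynkin index in the stacky setting and, for the product group, by the two \emph{partial} indices, and that the factor $2$ coming from the $7$-dimensional $\GG$-representation divides evenly and then cancels. As a sanity check, these values match the root-length picture of the two maximal-rank subgroups: $\SL_3$ is the long-root $A_2$ subsystem (index $1$), while the two $\SL_2$-factors are a short-root and a long-root $A_1$, whose indices $3$ and $1$ reflect the ratio $|\text{long}|^2/|\text{short}|^2 = 3$ in $G_2$.
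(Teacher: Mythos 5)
Your proof is correct and follows essentially the same route as the paper, whose entire argument is the sentence ``This follows straightforwardly from a Dynkin index computation using the tables in \cite{LS}.'' You simply carry out that computation explicitly (correctly obtaining indices $1$, $3$, $1$ from the branching of the $7$-dimensional representation and the index $2$ of $V_7$ as a $\GG$-module), which is a welcome expansion but not a different method.
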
 

\begin{proof}
This follows straightforwardly from a Dynkin index computation using the tables in \cite{LS}.
\end{proof}

\bigskip

We consider the involution $\sigma : \cM(\SL_3) \ra \cM(\SL_3)$ given by taking the dual $\sigma(E) = E^*$. Then the line 
bundle $\cL_{\SL_3}$ is invariant under the involution $\sigma$. We consider the linearisation 
$\sigma^* \cL_{\SL_3} \map{\sim} \cL_{\SL_3}$ which restricts to the identity over the fixed points of $\sigma$ and
denote by $H^0(\cM(\SL_3), \cL_{\SL_3})_+$ the subspace of invariant sections.

\bigskip

The group of $2$-torsion line bundles $JC[2]$ acts on $\cM(\SL_2)$ by tensor product and the Mumford group 
$\mathcal{G}(\cL_{\SL_2})$, a central extension of $JC[2]$, acts linearly on $H^0(\cM(\SL_2), \cL_{\SL_2})$ with level $1$.
The $\mathcal{G}(\cL_{\SL_2})$-representation  $H^0(\cM(\SL_2), \cL_{\SL_2}^{\otimes 3} ) \otimes 
H^0(\cM(\SL_2), \cL_{\SL_2} )$ is of level $4$ and therefore admits a linear $JC[2]$-action.

\begin{prop}
The induced maps between Verlinde spaces 
\begin{eqnarray*}
i^* : H^0(\cM(\GG), \cL_{\mathrm{G}_2} ) & \lra & H^0(\cM(\SL_3), \cL_{\SL_3})_+ \\
j^* : H^0(\cM(\GG), \cL_{\mathrm{G}_2} ) & \lra & \left[ H^0(\cM(\SL_2), \cL_{\SL_2}^{\otimes 3} ) \otimes 
H^0(\cM(\SL_2), \cL_{\SL_2} ) \right]_0
\end{eqnarray*}
take values in the subspace invariant under the involution $\sigma$ and the $JC[2]$-action respectively.

\end{prop}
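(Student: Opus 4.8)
The plan is to deduce both invariance statements from two identities of morphisms of stacks, namely $i \circ \sigma = i$ and $j \circ t_\alpha = j$ for every $\alpha \in JC[2]$, where $t_\alpha$ denotes the diagonal tensoring $(F,G) \mapsto (F \otimes \alpha, G \otimes \alpha)$, and then to match the identifications of line bundles arising from these identities with the linearizations used to define the subspaces $H^0(\cM(\SL_3), \cL_{\SL_3})_+$ and $[\,\cdots\,]_0$. Both morphism identities I read off from the explicit formulas \eqref{desj}.

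For $i^*$, I first note that \eqref{desj} gives $i(\sigma(E)) = i(E^*) = E^* \oplus E \oplus \cO_C = i(E)$, the last equality being the exchange of the first two summands. To see that this exchange is compatible with the $\GG$-structure, I would realize it by the element $n \in N_{\GG}(\SL_3)$ swapping $W$ and $\Lambda^2 W$ while preserving $\omega$; such $n$ normalizes $\SL_3 = \Stab_{\GG}(W)$ and acts on it as the outer (duality) automorphism $\sigma$, so that the standard surjectivity of $N_{\GG}(\SL_3)$ onto the outer automorphisms of $\SL_3$ is what is being used. Since $n$ acts on $\cM(\GG)$ through an inner automorphism, which is $2$-isomorphic to the identity, this yields $i \circ \sigma \cong i$. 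Pulling back a section $s$ and using $i^* \cL_{\mathrm{G}_2} = \cL_{\SL_3}$ then gives $\sigma^* i^* s = i^* s$ up to the canonical isomorphism $\sigma^* \cL_{\SL_3} \cong \cL_{\SL_3}$. To conclude that $i^* s$ is genuinely $+$-invariant I would evaluate this canonical isomorphism at a self-dual bundle $E \cong E^*$ (a fixed point of $\sigma$, provided for instance by orthogonal $\SL_3$-bundles), where it is visibly the identity; since the chosen linearization is normalized to be the identity on fixed points as well, and two linearizations of an involution differ by a global sign, the two coincide.

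For $j^*$, the formulas \eqref{desj} give $\End_0(F \otimes \alpha) = \End_0(F)$ and $(F \otimes \alpha) \otimes (G \otimes \alpha) = F \otimes G$, whence $j(F \otimes \alpha, G \otimes \alpha) = j(F, G)$, i.e. $j \circ t_\alpha = j$. This also explains why only the diagonal $JC[2]$, and not $JC[2] \times JC[2]$, acts trivially through $j$: the diagonal subgroup $\langle(-1,-1)\rangle$ is exactly $\ker \psi$ and therefore maps into the trivial center of $\GG$. Pulling back $s$, I obtain that $j^* s$ is invariant under the diagonal tensoring action of $JC[2]$.

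The main obstacle is the final bookkeeping step for $j^*$. Unlike the $\SL_3$ case, the $JC[2]$-action on the target is not geometric but is induced by the Mumford group $\mathcal{G}(\cL_{\SL_2})$, and the diagonal twist $t_\alpha$ lifts to $\mathcal{G}(\cL_{\SL_2})$ only up to the central $\CC^*$; a priori $j^* s$ is only invariant under some lift of each $t_\alpha$. The delicate point I would have to settle is that the canonical isomorphism $t_\alpha^* j^* \cL_{\mathrm{G}_2} \cong j^* \cL_{\mathrm{G}_2}$ produced by $j \circ t_\alpha = j$, transported through $j^* \cL_{\mathrm{G}_2} = \cL_{\SL_2}^{\otimes 3} \boxtimes \cL_{\SL_2}$, coincides with the canonical linear $JC[2]$-action that exists precisely because the total level $3+1 = 4$ is even. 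Granting this compatibility, $j^* s$ lands in $\big[H^0(\cM(\SL_2), \cL_{\SL_2}^{\otimes 3}) \otimes H^0(\cM(\SL_2), \cL_{\SL_2})\big]_0$, which completes the proof.
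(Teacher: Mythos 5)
Your proposal is correct and follows essentially the same strategy as the paper: both reduce the statement to the geometric identities $i\circ\sigma \cong i$ and $j\circ t_\alpha = j$, the former by exhibiting an element of $\GG$ that normalizes $\SL_3$ and induces its outer (duality) automorphism (you produce it as a swap of $W$ and $\Lambda^2 W$, the paper by lifting an element of $W(\GG)\setminus W(\SL_3)$ and using that $\mathfrak{sl}_3$ sits on the long roots), combined with the fact that inner automorphisms of $\GG$ act trivially on $\cM(\GG)$. The two bookkeeping points you isolate --- normalizing the $\sigma$-linearization at fixed points, and matching the geometric lift of $t_\alpha$ with the canonical linear $JC[2]$-action coming from the level-$4$ splitting of the Mumford group --- are treated as immediate in the paper (the linearization on $\cL_{\SL_3}$ is simply taken to be the one restricting to the identity on the fixed locus, and the second statement is said to ``follow immediately''), so your flagged ``granting this compatibility'' does not put you behind the published argument.
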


\begin{proof}
First we show that the map $i : \cM(\SL_3) \ra \cM(\GG)$ is $\sigma$-invariant. There is a natural inclusion
between Weyl groups $W(\SL_3) \subset W(\GG)$. Consider an element $g \in \GG$ which lifts an element in
$W(\GG) \setminus W(\SL_3)$. Then $g \notin \SL_3$.
As the subalgebra $\mathfrak{sl}_3$ of $\mathfrak{g}_2$ corresponds to the long roots and as $W(\GG)$ preserves the 
Cartan-Killing form, the inner automorphism $C(g)$ of $\GG$ induced by $g$ preserves the subgroup $\SL_3$. The restriction
of $C(g)$ to $\SL_3$ is an outer automorphism, which permutes its two fundamental representations. It thus induces 
the involution $\sigma$ on the moduli stack $\cM(\SL_3)$. Since any inner automorphism of $\GG$ induces the identity 
on the moduli stack $\cM(\GG)$, we obtain that $i$ is $\sigma$-invariant.

\bigskip

Since $i^* \cL_{\mathrm{G}_2} = \cL_{\SL_3}$ and since $i$ is $\sigma$-invariant, the line bundle $\cL_{\SL_3}$ carries a
natural $\sigma$-linearisation, namely the one which restricts to the identity over fixed points of $\sigma$. It is now clear that
$\im (i^*) \subset  H^0(\cM(\SL_3), \cL_{\SL_3})_+$.

\bigskip

The second statement follows immediately from the fact that $j$ is invariant under the diagonal $JC[2]$-action on the moduli stack
$\cM(\SL_2) \times \cM(\SL_2)$.
\end{proof}

\bigskip

\subsection{A family of divisors in $\PP H^0(\mathcal{M}(\GG), \mathcal{L}_{\mathrm{G}_2})$}

Let $\theta(C)$ resp. $\theta^+(C)$ denote the set of theta-characteristics resp. even theta-characteristics over the curve $C$.
The moduli stack $\cM(\SO_7)$ has two connected components $\cM^+(\SO_7)$ and $\cM^-(\SO_7)$ distinguished by the second Stiefel-Whitney class.
Since $\cM(\GG)$ is connected, the homomorphism $\GG \subset \SO_7$ induces a map
$$ \rho : \cM(\GG) \ra  \cM^+(\SO_7).$$
For each $\kappa \in \theta(C)$ we introduce the Pfaffian line bundle $\cP_\kappa$ over $\cM^+(\SO_7)$ (see e.g. \cite{BLS} section 5). We
have 
$$ \rho^* \cP_\kappa = \cL_{\mathrm{G}_2}. $$
Moreover, for $\kappa \in \theta^+(C)$, there exists a Cartier divisor $\Delta_\kappa \in \PP H^0(\cM^+(\SO_7), \cP_\kappa)$ with support
$$ \mathrm{supp} (\Delta_\kappa) = \{ E \in \cM^+(\SO_7) \ | \ \dim H^0(C, E(\CC^7) \otimes \kappa ) > 0 \}, $$
where $E(\CC^7)$ denotes the rank-$7$ vector bundle associated to $E$. We also denote by $\Delta_\kappa \in 
\PP H^0(\mathcal{M}(\GG), \mathcal{L}_{\mathrm{G}_2})$ the pull-back $\rho^*(\Delta_\kappa)$ to  $\cM(\GG)$. 
We will show later (Corollary \ref{spandelta}) that the family of divisors $\{ \Delta_\kappa \} _{\kappa \in \theta^+(C)}$ spans
the linear system $\PP H^0(\mathcal{M}(\GG), \mathcal{L}_{\mathrm{G}_2})$. Abusing notation we also write $\Delta_\kappa$ for a 
section of $H^0(\mathcal{M}(\GG), \mathcal{L}_{\mathrm{G}_2})$ vanishing at the divisor $\Delta_\kappa$.

\bigskip

\subsection{Verlinde spaces for $\SL_2$ at level $1$, $2$ and $3$}

We denote $V_n = H^0(\cM(\SL_2), \cL_{SL_2}^{\otimes n})$ for $n \geq 1$. We now review some results from \cite{B1} describing 
special bases of the vector spaces $V_1 \otimes V_1$ and $V_2$.

\bigskip

First we recall that the Mumford group $\cG(\cL_{\SL_2})$ acts linearly on the space $V_n$ with level $n$, i.e., the center $\CC^*
\subset \cG(\cL_{\SL_2})$ acts via $\lambda \mapsto \lambda^n$. For $n$ odd, there exists a unique (up to isomorphism) irreducible 
$\cG(\cL_{\SL_2})$-module $H_n$ of level $n$. Note that $\dim H_n = 2^g$. If $n$ is divisible by $4$, any $\cG(\cL_{\SL_2})$-module $Z$
of level $n$ admits a linear $JC[2]$-action. We denote by $Z_0$ the $JC[2]$-invariant subspace of $Z$.

\bigskip

We now list the results needed in the proof of Theorem II.

\bigskip

\begin{lem} \label{diminv}
We have 
$$ \dim \left( V_1 \otimes V_3 \right)_0 =  \frac{1}{|JC[2]|} \dim V_1 \otimes V_3. $$
\end{lem}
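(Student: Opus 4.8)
The plan is to compute $\dim(V_1 \otimes V_3)_0$ by a character argument for the $JC[2]$-action and to reduce everything to the vanishing of traces of the Heisenberg operators on the level-one module $V_1$, so that the tensor factor $V_3$ never actually enters.

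First I would record the relevant structure. The space $V_1 \otimes V_3$ is naturally a module over the Mumford group $\cG(\cL_{\SL_2})$ for the diagonal action, and on it the central $\CC^* \subset \cG(\cL_{\SL_2})$ acts by $\lambda \mapsto \lambda^{1+3} = \lambda^4$; hence it is a module of level $4$ and, by the recipe recalled above, carries a linear $JC[2]$-action $\alpha \mapsto \sigma_\alpha$. For each $\alpha \in JC[2]$ this $\sigma_\alpha$ is, up to a scalar, the image of a lift $\tilde\alpha \in \cG(\cL_{\SL_2})$ acting diagonally, i.e.\ $\sigma_\alpha = c_\alpha\,(\tilde\alpha|_{V_1} \otimes \tilde\alpha|_{V_3})$ for suitable $c_\alpha \in \CC^*$ normalised so that $\sigma_0 = \Id$; the correcting scalars are exactly what makes $\alpha \mapsto \sigma_\alpha$ a genuine homomorphism with $\sigma_\alpha^2 = \Id$.

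Next I would apply the averaging formula for the dimension of the invariant subspace, namely
$$ \dim(V_1 \otimes V_3)_0 = \frac{1}{|JC[2]|} \sum_{\alpha \in JC[2]} \mathrm{tr}\bigl(\sigma_\alpha \text{ on } V_1 \otimes V_3 \bigr). $$
Since the action is diagonal the trace factorises, $\mathrm{tr}(\sigma_\alpha) = c_\alpha\, \mathrm{tr}(\tilde\alpha|_{V_1})\,\mathrm{tr}(\tilde\alpha|_{V_3})$, so the crux becomes the claim that $\mathrm{tr}(\tilde\alpha|_{V_1}) = 0$ for every $\alpha \neq 0$. This is the familiar computation for the unique irreducible level-one module $V_1 = H_1$: the commutator pairing on $\cG(\cL_{\SL_2})$ is non-degenerate, so for $\alpha \neq 0$ there is $\beta$ with commutator $[\tilde\beta,\tilde\alpha] = -1 \in \CC^*$, whence $\tilde\beta\,\tilde\alpha\,\tilde\beta^{-1} = -\tilde\alpha$ acts on the level-one space $V_1$; conjugation-invariance of the trace then gives $\mathrm{tr}(\tilde\alpha|_{V_1}) = -\mathrm{tr}(\tilde\alpha|_{V_1})$, hence the vanishing.

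Consequently every term with $\alpha \neq 0$ drops out of the sum, independently of the correcting scalars $c_\alpha$, and only the term $\alpha = 0$ survives, contributing $\mathrm{tr}(\sigma_0) = \dim(V_1 \otimes V_3)$; this yields the asserted equality (and incidentally shows the answer is independent of the choices in the $JC[2]$-action). The main point to handle carefully is the first step, namely pinning down that the abstractly-defined level-$4$ $JC[2]$-action is the diagonal $\cG(\cL_{\SL_2})$-action rescaled by scalars, so that the trace genuinely factorises; once that is in place the vanishing of $\mathrm{tr}(\tilde\alpha|_{V_1})$ is a short Stone--von Neumann type argument, and the level-three factor plays no role whatsoever.
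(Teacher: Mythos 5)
Your argument is correct, but it runs along a different track from the paper's. The paper decomposes $V_1 \otimes V_3$ as a $\cG(\cL_{\SL_2})$-module into a direct sum of copies of $H_1 \otimes H_3$ (each of dimension $2^{2g} = |JC[2]|$) and then checks that the $JC[2]$-invariants of a single factor $H_1 \otimes H_3$ form a line, so the count of invariants equals the number of factors, which is $\dim(V_1\otimes V_3)/|JC[2]|$. You instead keep $V_1 \otimes V_3$ intact and apply the averaging formula $\dim Z_0 = \tfrac{1}{|JC[2]|}\sum_\alpha \mathrm{tr}(\sigma_\alpha)$, reducing everything to the vanishing of $\mathrm{tr}(\tilde\alpha|_{V_1})$ for $\alpha \neq 0$, which you get from the standard Stone--von Neumann conjugation trick ($\tilde\beta\tilde\alpha\tilde\beta^{-1} = -\tilde\alpha$ on a level-one module). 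Both are exercises in Heisenberg-group representation theory and each would serve as the ``straightforward'' verification the paper leaves implicit; your version has the advantage of making transparent that the level-three factor is irrelevant (the same identity holds with $V_3$ replaced by any module on which the diagonal action has level divisible by $4$ after tensoring with $V_1$), while the paper's isotypic decomposition is the statement that generalizes most directly to the basis constructions it needs elsewhere (e.g.\ Proposition 2.5). The one point you rightly flag as needing care --- that the abstract $JC[2]$-action on a level-$4$ module is the diagonal lift action corrected by scalars --- is exactly how the paper sets up the action in Section 2.4, so your trace factorisation is legitimate.
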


\begin{proof}
By the general representation theory of Heisenberg groups, the $\cG( \cL_{\SL_2})$-module $V_1 \otimes V_3$ 
decomposes into a direct sum of factors, which are all isomorphic to $H_1 \otimes H_3$. It is then straightforward to show that 
the space of $JC[2]$-invariants
$\left( H_1 \otimes H_3 \right)_0$ is $1$-dimensional.
\end{proof}

\bigskip

\begin{prop}[\cite{B1}] \label{beauresult}
The two $\cG(\cL_{\SL_2})$-modules $V_1 \otimes V_1$ and $V_2$ of level $2$ decompose as direct sum of $1$-dimensional 
character spaces for $\cG(\cL_{\SL_2})$
$$ V_1 \otimes V_1 = \bigoplus_{\kappa \in \theta(C)} \CC \xi_\kappa, \qquad
V_2 = \bigoplus_{\kappa \in \theta^+(C)} \CC d_\kappa. $$  
The supports of the zero divisors $Z(d_\kappa)$ and $Z(\xi_\kappa)$ equal
\begin{eqnarray*} 
\mathrm{supp}  \ Z(d_\kappa) & = & \{ E \in \cM(\SL_2) \ | \ \dim H^0(C, \End_0(E) \otimes \kappa ) > 0 \}, \\
\mathrm{supp}  \ Z(\xi_\kappa) & = & \{ (E,F) \in \cM(\SL_2) \times \cM(\SL_2) \ | \ \dim H^0(C, E \otimes F \otimes \kappa ) > 0 \}.
\end{eqnarray*}
Moreover, if $C$ has no vanishing theta-null, $\xi_\kappa$ is mapped to $d_\kappa$ by the multiplication map $V_1 \otimes V_1 \ra V_2$.  

\end{prop}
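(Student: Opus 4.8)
The plan is to descend everything to the Jacobian and exploit the representation theory of the theta (Heisenberg) group $\cG(\cL_{\SL_2})$, a central extension $1 \to \CC^* \to \cG(\cL_{\SL_2}) \to JC[2] \to 1$ whose commutator pairing is the Weil pairing $e_2 : JC[2] \times JC[2] \to \mu_2$. First I would use the classical theta-map identification $V_1 \cong H^0(JC, 2\Theta)$ (for a symmetric theta divisor $\Theta$), under which $V_1$ is the unique irreducible level-one $\cG(\cL_{\SL_2})$-module $H_1$, of dimension $2^g$. The key observation is that on $V_1 \otimes V_1$ (with the diagonal action) and on $V_2$ (level two) the image of $\cG(\cL_{\SL_2})$ in $\GL(V_1 \otimes V_1)$, resp. $\GL(V_2)$, is abelian: two lifts $\tilde\alpha,\tilde\beta$ of $\alpha,\beta \in JC[2]$ commute up to $e_2(\alpha,\beta)^2 = 1$, since $e_2$ is valued in $\mu_2$. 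Hence both modules split as direct sums of one-dimensional character spaces.

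Next I would count and index these characters. The characters occurring at level two form a torsor under $\widehat{JC[2]} \cong JC[2]$, which the symmetric theta structure identifies canonically with the $JC[2]$-torsor $\theta(C)$ of theta-characteristics, the even/odd dichotomy corresponding to the parity (Arf invariant) of the associated quadratic function. Since $\dim (V_1 \otimes V_1) = 2^{2g} = |\theta(C)|$ and $e_2$ is non-degenerate, every character appears exactly once in $V_1 \otimes V_1$, giving a basis $\{\xi_\kappa\}_{\kappa \in \theta(C)}$ of joint eigenvectors. For $V_2$ the Verlinde formula gives $\dim V_2 = |\theta^+(C)|$, and I would show that the characters that actually occur are exactly the even ones: this is where the orthogonal $\SO_3 = \PGL_2$ structure enters, $\cL_{\SL_2}^{\otimes 2}$ being governed by a Pfaffian-type theta structure that selects the even characteristics, in complete analogy with the bundles $\cP_\kappa$ of Subsection 2.3. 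This yields the basis $\{d_\kappa\}_{\kappa \in \theta^+(C)}$.

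To identify the supports I would exhibit the geometric theta divisors and check they are the eigenvectors. The locus $D_\kappa = \{(E,F) : h^0(E \otimes F \otimes \kappa) > 0\}$ has, for generic input, codimension one (the bundle $E \otimes F \otimes \kappa$ has vanishing Euler characteristic), and the Dr\'ezet--Narasimhan determinant construction exhibits it as a nonzero section of $\cL_{\SL_2} \boxtimes \cL_{\SL_2}$. Because $(E \otimes \alpha) \otimes (F \otimes \alpha) \otimes \kappa \cong E \otimes F \otimes \kappa$ for $\alpha \in JC[2]$, the divisor $D_\kappa$ is invariant under the diagonal $JC[2]$-action, so its section is a $\cG(\cL_{\SL_2})$-eigenvector, and matching characters identifies it with $\xi_\kappa$. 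The same argument applied to $\{E : h^0(\End_0(E) \otimes \kappa) > 0\}$, using $\End_0(E \otimes \alpha) = \End_0(E)$, produces $d_\kappa$, the self-duality of $\End_0(E)$ together with $\kappa^{\otimes 2} = K_C$ making $\End_0(E) \otimes \kappa$ a $K_C$-valued orthogonal bundle and thereby forcing the relevant $\kappa$ to be even.

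Finally, for the multiplication map $V_1 \otimes V_1 \to V_2$ I would use that it is $\cG(\cL_{\SL_2})$-equivariant. Equivariance alone forces each $\xi_\kappa$ into the $\chi_\kappa$-eigenline of $V_2$, hence to a multiple of $d_\kappa$ for $\kappa$ even and to $0$ for $\kappa$ odd. The hypothesis that $C$ has no vanishing theta-null is then what guarantees that none of the $d_\kappa$ degenerates and that the even-character eigenlines of $V_1 \otimes V_1$ map isomorphically onto $V_2$, giving $\xi_\kappa \mapsto d_\kappa$ up to a nonzero scalar. I expect the main obstacle to be not the character decomposition, which is essentially formal once the Heisenberg picture is in place, but the geometric input: proving that the jumping loci $D_\kappa$ are genuine divisors in the correct linear systems, and pinning down the precise even/odd matching between characters and theta-characteristics, including the exact role played by the no-vanishing-theta-null hypothesis in the last assertion.
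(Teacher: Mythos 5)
The paper gives no proof of this proposition---it is quoted directly from Beauville's [B1]---and your sketch is essentially a faithful reconstruction of the argument in that reference: the level-$2$ Heisenberg modules are abelianized because the Weil pairing is $\mu_2$-valued, the resulting characters are indexed by theta-characteristics, the eigenvectors are realized geometrically by determinant-type divisors (with the parity of $h^0(\End_0(E)\otimes\kappa)$, constant by the $K_C$-valued orthogonal structure, explaining why only even $\kappa$ occur in $V_2$), and the last claim follows from equivariance of the multiplication map. The one point you flagged as unclear and should sharpen is the role of the theta-null hypothesis: the multiplication map is restriction to the diagonal $E \mapsto (E,E)$, under which $Z(\xi_\kappa)$ restricts to $\{E \ | \ h^0(E\otimes E\otimes\kappa)>0\}$, and since $E\otimes E\otimes\kappa \cong \kappa\oplus(\End_0(E)\otimes\kappa)$ this coincides with $Z(d_\kappa)$ precisely when $h^0(\kappa)=0$, whereas $\xi_\kappa\mapsto 0$ if $\kappa$ is a vanishing theta-null---so the hypothesis is needed not because $d_\kappa$ degenerates but because $\xi_\kappa$ would die on the diagonal.
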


\begin{prop}[\cite{A}] \label{aberesult}
For a general curve the multiplication map of global sections 
$$ \mu : V_1 \otimes V_2 \lra V_3 $$
is surjective.
\end{prop}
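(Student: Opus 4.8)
The plan is to reduce the statement to a projective–normality question and then to settle it by degenerating the curve. Since $\cL_{\SL_2}$ has Dynkin index $1$ it descends to the coarse moduli space $M = M(\SL_2)$, so $V_n = H^0(M, \cL_{\SL_2}^{\otimes n})$ and $\mu$ is literally the multiplication of sections. By Proposition \ref{beauresult} the degree-two multiplication $\nu : V_1 \otimes V_1 \to V_2$ is already surjective for a curve without vanishing theta-null, since it carries the basis $\{\xi_\kappa\}$ onto the basis $\{d_\kappa\}$. A short diagram chase then shows that $\mu$ is surjective if and only if the triple product $\Phi : V_1^{\otimes 3} \to V_3$ is surjective: $\Phi$ factors as $\mu \circ (\mathrm{id}_{V_1}\otimes \nu)$, and tensoring a surjection by $\mathrm{id}_{V_1}$ is again surjective, so $\im \Phi = \im \mu$. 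Hence Proposition \ref{aberesult} is equivalent to the surjectivity of $\Sym^3 V_1 \to V_3$, i.e. to the $3$-normality of the image of the theta map $M \hra \PP(V_1^*) = |2\TT|$.

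To prove this last surjectivity for a general curve I would argue by semicontinuity and induction on the genus $g$. The spaces $V_n$ are the fibres of the flat Verlinde (conformal-block) bundles over the moduli of stable curves $\overline{\cM_g}$, and the multiplication maps extend to morphisms of these bundles; since $\dim V_1, \dim V_2, \dim V_3$ are constant by the Verlinde formula and the rank of $\mu$ is lower-semicontinuous, surjectivity is an open condition on $\overline{\cM_g}$. It therefore suffices to exhibit a \emph{single} stable curve on which $\mu$ is onto, and I would take an irreducible curve $C_0$ with one node, normalising to a smooth curve $\widetilde C$ of genus $g-1$ with two marked points $p,q$. Factorization for $\slfr_2$-conformal blocks identifies $\cV_n(C_0)$ with $\bigoplus_{0 \le \la \le n} \cV_n(\widetilde C; p,q; \la, \la^*)$, the sum over level-$n$ weights, and under these isomorphisms the multiplication map becomes a direct sum of multiplication-with-insertions maps governed by the $\slfr_2$ fusion coefficients at levels $1,2,3$. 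Iterating down to genus $0$ reduces everything to conformal blocks on $\PP^1$ with marked points, where the relevant maps are computed by the Clebsch–Gordan/fusion rules and checked to be surjective.

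The main obstacle is the third step of this scheme: making the compatibility of $\mu$ with the factorization isomorphisms precise and, in particular, controlling the fusion contributions so that surjectivity, and not merely a dimension inequality, propagates through the induction. Concretely one must verify that in each summand of the factorized map the new sections created at the node (the insertions indexed by $\la$) are actually produced by products of level-$1$ and level-$2$ insertions; this is where the low-level representation theory of $\slfr_2$ does the real work, and where the genus-$0$ base case must be analysed with care. A secondary technical point is the extension of the Verlinde bundles and of $\mu$ across the boundary of $\overline{\cM_g}$, together with the flatness needed for the semicontinuity argument, for which I would rely on the sheaf of conformal blocks of Tsuchiya–Ueno–Yamada. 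An alternative, more algebro-geometric route would bypass conformal blocks and establish the $3$-normality of $M \hra |2\TT|$ directly by a Koszul-cohomology and degeneration argument on the moduli space itself, but the factorization approach seems the most systematic for a statement that holds only for the general curve.
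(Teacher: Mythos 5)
The paper offers no proof of this proposition: it is quoted verbatim from Abe's theorem on projective normality of $\Mrm(\SL_2)$ for a generic curve, which is exactly the statement that the maps $\Sym^n V_1 \to V_n$ are surjective. Your first paragraph is correct and is precisely the bridge to that citation: since $\nu : V_1 \otimes V_1 \to V_2$ is onto for a curve without vanishing theta-null (by Proposition \ref{beauresult}, as $\nu$ sends the $\xi_\kappa$, $\kappa \in \theta^+(C)$, onto the basis $\{d_\kappa\}$ of $V_2$), associativity of multiplication gives $\im \mu = \im(\Sym^3 V_1 \to V_3)$, so the proposition is equivalent to $3$-normality of $\Mrm(\SL_2) \subset |2\Theta|$. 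Had you stopped there and cited Abe, your argument would coincide with the paper's.

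The rest of your proposal, however, is an attempt to reprove Abe's theorem itself, and as written it has genuine gaps rather than a complete argument. The semicontinuity step is fine in principle, but it presupposes two things you do not establish: (i) that the multiplication maps $V_1 \otimes V_2 \to V_3$ extend to a morphism of locally free sheaves of conformal blocks over a neighbourhood of the boundary of $\overline{\cM}_g$ \emph{compatibly} with the sewing/factorization isomorphisms --- for a nodal curve the fibres $\cV_n(C_0)$ are not spaces of sections of a line bundle on a moduli space of bundles in any way that makes ``multiplication of sections'' automatic, so this compatibility is a theorem to be proved, not a formality; and (ii) the base case. Your own text concedes that the key point --- that in each factorization summand the level-$3$ insertions are generated by products of level-$1$ and level-$2$ insertions, controlled by the $\slfr_2$ fusion rules on $\PP^1$ --- is ``where the real work lies,'' and no verification is given. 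Surjectivity does not propagate through the induction for free: the factorized multiplication map is only block-upper-triangular with respect to the weight decomposition, and one must actually check that the relevant blocks are onto. In short, the reduction to $3$-normality is correct and matches the intended reference, but the degeneration argument is a programme (close in spirit to Abe's actual proof) rather than a proof.
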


\bigskip

\section{Proof of the Main Theorems}

In this section we give the proof of the two theorems and of the corollary stated in the introduction.

\subsection{Theorem I}
The first step is to show that the two spaces appearing at both ends of the map $j^*$ have the same dimension. The dimension of
the space on the RHS is computed by Lemma \ref{diminv}. The statement then follows from  \eqref{Verlinde} and 
from the equalities $\dim V_1 = 2^g$ and $|JC[2]| = 2^{2g}$. 

\bigskip

The next step is to show that $j^*$ is surjective for a {\em general} curve, which will imply by the first step that $j^*$ 
is an isomorphism for a general curve. We consider the following map 
$$ \alpha : V_1 \otimes V_1 \otimes V_2 \lra V_1 \otimes V_3, \qquad u \otimes v \otimes w \mapsto u \otimes \mu(v \otimes w), $$
where $\mu$ is the multiplication map introduced in Proposition \ref{aberesult}. By Proposition \ref{aberesult} $\alpha$ is
surjective for a general curve, hence its restriction to the subspace of $JC[2]$-invariant sections $\alpha_0 : (V_1 \otimes V_1 \otimes V_2)_0  \lra
 (V_1 \otimes V_3)_0$ remains surjective. Moreover, one easily works out that the family of tensors $\{ \xi_\kappa \otimes d_\kappa \}_{ \kappa
 \in \theta^+(C) }$ forms a basis of $(V_1 \otimes V_1 \otimes V_2)_0$ .
 
\bigskip

We will use the family of divisors $\{ \Delta_\kappa \}_{\kappa \in \theta^+(C)}$ introduced in section 2.3.

\begin{lem} \label{imageDeltakappa}
For all $\kappa \in \theta^+(C)$ we have the equality (up to a scalar)
$$ j^*(\Delta_\kappa) = \alpha_0 ( \xi_\kappa \otimes d_\kappa ).$$
\end{lem}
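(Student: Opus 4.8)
The plan is to show that both $j^*(\Delta_\kappa)$ and $\alpha_0(\xi_\kappa \otimes d_\kappa)$ are nonzero sections of the line bundle $\cL_{\SL_2}^{\otimes 3} \boxtimes \cL_{\SL_2}$ on $\cM(\SL_2)\times\cM(\SL_2)$ with the same zero divisor, and then to conclude that they are proportional. First I would unravel the right-hand side. Writing $\xi_\kappa = \sum_i u_i \otimes v_i \in V_1 \otimes V_1$, with $v_i$ carrying the variable $F$ of the first (level-$3$) factor and $u_i$ the variable $G$ of the second (level-$1$) factor, the definition of $\alpha$ gives
$$ \alpha_0(\xi_\kappa \otimes d_\kappa) = \sum_i u_i \otimes \mu(v_i \otimes d_\kappa), $$
whose value at a pair $(F,G)$ is $d_\kappa(F)\,\xi_\kappa(F,G)$. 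Hence, under the identification $V_1 \otimes V_3 \cong V_3 \otimes V_1$, this section is simply the product $d_\kappa \cdot \xi_\kappa$, where $d_\kappa$ is pulled back along the projection $p_1$ onto the $F$-factor. Since the divisor of a product of sections is the sum of the divisors, its zero divisor is $p_1^*\, Z(d_\kappa) + Z(\xi_\kappa)$.

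Next I would compute the zero locus of $j^*(\Delta_\kappa)$. By \eqref{desj} the rank-$7$ bundle underlying $j(F,G)$ is the direct sum $\End_0(F) \oplus (F \otimes G)$, so additivity of $h^0$ over direct sums gives
$$ h^0\bigl(C, (\End_0(F) \oplus F \otimes G) \otimes \kappa\bigr) = h^0(C, \End_0(F) \otimes \kappa) + h^0(C, F \otimes G \otimes \kappa). $$
Comparing the support of $\Delta_\kappa$ from Section 2.3 with the supports of $Z(d_\kappa)$ and $Z(\xi_\kappa)$ in Proposition \ref{beauresult}, the section $j^*(\Delta_\kappa)$ vanishes at $(F,G)$ exactly when $h^0(\End_0(F)\otimes\kappa)>0$ or $h^0(F\otimes G\otimes\kappa)>0$; that is, its support equals $p_1^{-1}(\mathrm{supp}\, Z(d_\kappa)) \cup \mathrm{supp}\, Z(\xi_\kappa)$. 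A general pair $(F,G)$ avoids both of these proper theta-type loci, so $j^*(\Delta_\kappa)$ is nonzero, and its zero divisor is a positive combination $b\, p_1^*\, Z(d_\kappa) + a\, Z(\xi_\kappa)$ supported on these two divisors.

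Finally I would pin down the multiplicities $a$ and $b$ using the grading of $\Pic\bigl(\cM(\SL_2)\times\cM(\SL_2)\bigr)$ by the two levels. The class of $Z(\xi_\kappa)$ is $\cL_{\SL_2}\boxtimes\cL_{\SL_2}$ and that of $p_1^* Z(d_\kappa)$ is $\cL_{\SL_2}^{\otimes 2}\boxtimes\cO$, so the divisor $b\, p_1^* Z(d_\kappa) + a\, Z(\xi_\kappa)$ has class $\cL_{\SL_2}^{\otimes(a+2b)} \boxtimes \cL_{\SL_2}^{\otimes a}$. Since $j^*(\Delta_\kappa)$ is a section of $\cL_{\SL_2}^{\otimes 3} \boxtimes \cL_{\SL_2}$ by the computation of $j^*\cL_{\mathrm{G}_2}$, the equations $a=1$ and $a+2b=3$ force $a=b=1$. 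Thus $j^*(\Delta_\kappa)$ and $\alpha_0(\xi_\kappa \otimes d_\kappa)$ have the same zero divisor $p_1^* Z(d_\kappa) + Z(\xi_\kappa)$, and being nonzero sections of the same line bundle on a complete variety they agree up to a scalar.

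The main obstacle I anticipate is passing from the set-theoretic equality of supports to the divisor-theoretic identity $Z(j^*\Delta_\kappa) = p_1^* Z(d_\kappa) + Z(\xi_\kappa)$: the additivity of $h^0$ only controls the support, and one must be sure that no multiplicity is lost and that $Z(\xi_\kappa)$ and $p_1^*Z(d_\kappa)$ are the only (reduced, distinct) components. The grading argument above handles the multiplicity bookkeeping cleanly once these are known to be the sole components; conceptually the most transparent justification is that $\Delta_\kappa$ is the Pfaffian section of the determinant-of-cohomology bundle attached to the orthogonal bundle $E(\CC^7)\otimes\kappa$, and that Pfaffians are multiplicative over orthogonal direct sums. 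This makes $j^*(\Delta_\kappa)$ factor as the product of the Pfaffian sections of the $\SO_3$-bundle $\End_0(F)$ and the $\SO_4$-bundle $F\otimes G$, which by Proposition \ref{beauresult} are precisely $d_\kappa$ and $\xi_\kappa$.
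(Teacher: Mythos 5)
Your proof is correct and follows essentially the same route as the paper's: both identify the zero divisor of $j^*(\Delta_\kappa)$ with $pr_1^*Z(d_\kappa)+Z(\xi_\kappa)$ using the decomposition $j(F,G)=\End_0(F)\oplus F\otimes G$ and the supports from Proposition \ref{beauresult}, and observe this is also the divisor of $\alpha_0(\xi_\kappa\otimes d_\kappa)$. The paper states the divisor identity without justifying multiplicities, whereas you supply that missing bookkeeping via the bidegree computation in $\Pic(\cM(\SL_2)\times\cM(\SL_2))$ (and, more conceptually, via multiplicativity of Pfaffian sections over orthogonal direct sums), which is a welcome elaboration rather than a different argument.
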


\begin{proof}
Using the description of $j$ given in \eqref{desj} and the description of the divisors $Z(d_\kappa)$ and $Z(\xi_\kappa)$ given
in Proposition \ref{beauresult} we obtain the following decomposition as divisor in $\cM(\SL_2) \times  \cM(\SL_2)$
$$ j^*(\Delta_\kappa) =  pr_1^* Z(d_\kappa) + Z(\xi_\kappa),$$
where $pr_1$ is the projection onto the first factor. This shows the lemma.
\end{proof}

Surjectivity (for a general curve) now follows as follows: since $\{ \xi_\kappa \otimes d_\kappa \}_{ \kappa
 \in \theta^+(C) }$ forms a basis of $(V_1 \otimes V_1 \otimes V_2)_0$  and since $\alpha_0$ is surjective, we see by Lemma \ref{imageDeltakappa} 
 that the family $\{ j^*(\Delta_\kappa) \}_{ \kappa \in \theta^+(C) }$ generates $(V_1 \otimes V_3)_0$.
 
\bigskip

Finally, we complete the proof by showing that $j^*$ is an isomorphism for every smooth curve. We use the identification \cite{LS} for any semi-simple,
simply-connected complex Lie group $G$ of the Verlinde space $H^0(\cM(G), \cL_G^{\otimes l})$ with the space of conformal blocks 
$\cV^*_l(\mathfrak{g})$ at level $l$, where $\mathfrak{g}$ is the Lie algebra of $G$, for the two cases $G = \GG$ and $G = \SL_2 \times \SL_2$.
Then, \cite{Be} Proposition 5.2 shows functoriality of the above isomorphism under group extensions. In our case $\SL_2 \times \SL_2 \ra \GG$
the linear map $j^*$ can therfore be identified with the natural map
$$ \beta_C : \cV^*_1(\mathfrak{g}_2) \lra  \cV^*_3(\mathfrak{sl}_2) \otimes \cV^*_1(\mathfrak{sl}_2).$$
We can define this linear map for a family of smooth curves $\pi : \cC \ra S$:  by \cite{U} there exist vector bundles of conformal blocks
over the base $S$ together with a homomorphism $\beta$, which specializes over a point $s \in S$  to the linear map $\beta_{\pi^{-1}(s)}$. These
vector bundles are equipped with flat projective connections, the so-called WZW connections.

\bigskip

We now observe that the Lie algebra embedding $\mathfrak{sl}_2 \oplus \mathfrak{sl}_2 \subset \mathfrak{g}_2$ is conformal --- by 
direct computation. Then, by \cite{Be} Proposition 5.8, we obtain that the map $\beta$ is projectively flat for the two WZW connections, hence
its rank is constant in the family $\pi: \cC \ra S$. Since, by the previous step, $\beta_C$ is injective for a general curve $C$ (note that we do not take 
$JC[2]$-invariants on the conformal blocks), we conclude that $\beta$ is injective for any smooth curve. Hence $j^*$ is an 
isomorphism for any curve. This completes the proof.

\bigskip

From the above proof we immeadiately deduce the 

\begin{cor} \label{spandelta}
For a general curve the family $\{ \Delta_\kappa \}_{\kappa \in \theta^+(C)}$ linearly spans $\PP H^0(\mathcal{M}(\GG), \mathcal{L}_{\mathrm{G}_2})$.
\end{cor}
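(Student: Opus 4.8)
The plan is to deduce the spanning statement directly from the surjectivity argument established in the proof of Theorem I, combined with the fact that $j^*$ is an isomorphism. First I would recall that, by Lemma \ref{imageDeltakappa}, each pulled-back section satisfies $j^*(\Delta_\kappa) = \alpha_0(\xi_\kappa \otimes d_\kappa)$ up to a nonzero scalar, so the family $\{j^*(\Delta_\kappa)\}_{\kappa \in \theta^+(C)}$ coincides, projectively, with the image under $\alpha_0$ of the family $\{\xi_\kappa \otimes d_\kappa\}_{\kappa \in \theta^+(C)}$.

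Next I would invoke the two facts used in that proof: the tensors $\{\xi_\kappa \otimes d_\kappa\}_{\kappa \in \theta^+(C)}$ form a basis of $(V_1 \otimes V_1 \otimes V_2)_0$, and, for a general curve, the restricted multiplication map $\alpha_0 : (V_1 \otimes V_1 \otimes V_2)_0 \lra (V_1 \otimes V_3)_0$ is surjective (a consequence of Proposition \ref{aberesult}). Since a surjective linear map sends a basis of its source to a spanning set of its target, the family $\{\alpha_0(\xi_\kappa \otimes d_\kappa)\}_{\kappa \in \theta^+(C)}$, and hence $\{j^*(\Delta_\kappa)\}_{\kappa \in \theta^+(C)}$, spans $(V_1 \otimes V_3)_0$.

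Finally I would transport this back through $j^*$. By Theorem I the map $j^* : H^0(\cM(\GG), \cL_{\mathrm{G}_2}) \lra (V_1 \otimes V_3)_0$ is an isomorphism, so applying its inverse to the spanning family $\{j^*(\Delta_\kappa)\}$ shows that $\{\Delta_\kappa\}_{\kappa \in \theta^+(C)}$ spans $H^0(\cM(\GG), \cL_{\mathrm{G}_2})$, which is precisely the asserted spanning at the level of the projective linear system $\PP H^0(\cM(\GG), \cL_{\mathrm{G}_2})$.

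I do not expect any genuine obstacle here, since every ingredient is already in place; the only point needing a little care is the bookkeeping of the ``up to scalar'' identification in Lemma \ref{imageDeltakappa}, which is harmless because spanning is a projective notion insensitive to rescaling individual sections. I would also emphasize that this is a spanning statement and not a basis statement: the number of even theta-characteristics $2^{g-1}(2^g+1)$ exceeds $\dim H^0(\cM(\GG), \cL_{\mathrm{G}_2})$, so the sections $\Delta_\kappa$ are necessarily linearly dependent, consistent with $\{\xi_\kappa \otimes d_\kappa\}$ being a basis of the larger space $(V_1 \otimes V_1 \otimes V_2)_0$ that surjects onto the target.
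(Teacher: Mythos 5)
Your proposal is correct and follows essentially the same route as the paper: the authors likewise deduce the corollary immediately from the proof of Theorem I, using Lemma \ref{imageDeltakappa}, the fact that $\{\xi_\kappa \otimes d_\kappa\}$ is a basis of $(V_1\otimes V_1\otimes V_2)_0$, the surjectivity of $\alpha_0$ for a general curve, and the injectivity of $j^*$. Your closing remark on the scalar being harmless and on spanning versus basis is accurate but not needed.
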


\bigskip

{\bf Remark: } Note that Hitchin's connection \cite{H1} is only defined on the vector bundle with fiber
 $H^0(\cM(\GG), \cL_{\mathrm{G}_2}^{\otimes 2})$. We only get a connection for $\GG$ at level $1$ via the
 isomorphism with the bundle of conformal blocks.

 \bigskip

\subsection{Theorem II}
We consider the family of divisors $\{ \Delta_\kappa \} _{\kappa \in \theta^+(C)}$ of $\PP H^0(\mathcal{M}(\GG), \mathcal{L}_{\mathrm{G}_2})$
introduced in section 2.3. A straightforward computation shows that $i^*(\Delta_\kappa) = H_\kappa$, where $H_\kappa \in 
\PP H^0(\cM(\SL_3), \cL)_+$ is the divisor with support 
$$ \mathrm{supp} (H_\kappa) = \{ E \in \cM(\SL_3) \ | \ \dim H^0(C, E \otimes \kappa ) > 0 \}. $$
Therefore, in order to show surjectivity of $i^*$, it is enough to show that the family $\{ H_\kappa \}_{\kappa \in \theta^+(C)}$ linearly 
spans $\PP H^0(\cM(\SL_3), \cL)_+$. This is done as follows.

\bigskip

We introduce the Riemann Theta divisor $\Theta = \{ L \in \Pic^{g-1}(C) \ |  \  \dim H^0( C, L) > 0 \}$ in the Picard variety $\Pic^{g-1}(C)$ parameterizing degree $g-1$ line bundles over $C$. We recall \cite{BNR} that there is a canonical isomorphism
\begin{equation} \label{bnr3}
H^0(\Pic^{g-1}(C), 3\Theta)^* \map{\sim} H^0(\cM(\SL_3), \cL),
\end{equation}
which is invariant for the two involutions, i.e. $L \mapsto K_C \otimes L^{-1}$ on $\Pic^{g-1}(C)$ and $\sigma$ on 
$\cM(\SL_3)$ respectively. We thus obtain an isomorphism between subspaces of invariant divisors 
$|3\Theta|_+^* \cong \PP H^0(\cM(\SL_3), \cL)_+$. Is is easy to check that via this isomorphism 
$H_\kappa = \varphi_{3\Theta}(\kappa)$, where 
$$ \varphi_{3\Theta} : \Pic^{g-1}(C) \dasharrow |3 \Theta|_+^*$$
is the rational map given by the linear system $|3\Theta|_+$. In order to show that the family of points 
$\{ \varphi_{3\Theta}(\kappa) \}_{\kappa \in \theta^+(C)}$ linearly spans $|3\Theta|^*_+$, we factorize the map $\varphi_{3\Theta}$ as
$$\varphi_{4\Theta} : \Pic^{g-1}(C) \dasharrow |4\Theta|^*_+ \dasharrow |3\Theta|^*_+,$$
where the first map is the rational map given by the linear system $|4\Theta|^*_+$ and the second is the 
projection induced by the inclusion $H^0(3\Theta)_+ \map{+ \Theta} H^0(4\Theta)_+$. The result then follows from
the main statement in \cite{KPS} saying that $\{ \varphi_{4\Theta}(\kappa) \}_{\kappa \in \theta^+(C)}$ is a projective
basis of $|4\Theta|^*_+$ if $C$ has no vanishing theta-null.

\bigskip

{\bf Remark:} For a curve of genus $2$ we observe that both spaces have the same dimension, hence $i^*$ is an isomorphism in that case --- note that
any genus-$2$ curve is without vanishing theta-null.

\bigskip

\subsection{Corollary}
The statement of the corollary is proved in \cite{LPS} for the conformal block $\cV^*_3(\mathfrak{sl}_2) =  H^0(\cM(\SL_2), \cL_{SL_2}^{\otimes 3})$.
Having already observed in the proof of Theorem I that the vector bundle map $\beta$ is projectively flat for the WZW-connections, it suffices
to show the statement for the $JC[2]$-invariants of $\cV_3^*(\mathfrak{sl}_2) \otimes \cV_1^*(\mathfrak{sl}_2)$. This follows from
\cite{Be} Corollary 4.2.

\bigskip

\section{Some remarks}

Here we collect some additional computations.

\subsection{Verlinde formula for $l=2$ and $g=2$} We just record the computation of the Verlinde number $\dim H^0(\cM(\GG), \cL^2) = 30$. Since 
the line bundle $\cL^2$ descends to the coarse moduli space $\Mrm(\GG)$ we obtain a rational $\theta$-map 
$$ \theta : \Mrm(\GG) \lra |\cL^2|^* = \PP^{29}. $$
We refer to the paper \cite{B3} for some results on  the $\theta$-map on a genus-$2$ curve for vector bundles of small rank.

\subsection{Analogue for the exceptional group $\mathrm{F}_4$} There is a similar coincidence for the conformal embedding of Lie algebras
$\slfr(2) \oplus \mathfrak{sp}(6) \subset \mathfrak{f}_4$. In fact, we observe that 
$\dim H^0(\cM(\mathrm{F}_4), \cL_{\mathrm{F}_4}) = \dim H^0(\mathcal{M}(\GG), \mathcal{L}_{\mathrm{G}_2})$
and that $\dim H^0(\cM(\mathrm{Sp}_6), \cL_{\mathrm{Sp}_6}) = \dim H^0(\mathcal{M}(\SL_2), \mathcal{L}^{\otimes 3}_{\SL_2})$ (symplectic
strange duality). Moreover, $\ker ( \SL_2 \times \mathrm{Sp}_6 \ra \mathrm{F}_4) = \ZZ/2$. These facts suggest a similar isomorphism 
for the Verlinde space $H^0(\cM(\mathrm{F}_4), \cL_{\mathrm{F}_4})$, but the method presented in this paper does to apply to that case.

\bigskip

\end{document}